\documentclass[12pt]{amsart}

\usepackage[colorlinks,citecolor=blue,urlcolor=blue,linkcolor=blue]{hyperref}
\usepackage{amsmath}
\usepackage{amsfonts}
\usepackage{amssymb}
\usepackage{verbatim}
\usepackage[left=2.9cm,right=2.9cm,top=3cm,bottom=3cm]{geometry}
\usepackage{soul}
\usepackage{graphicx}

\newcommand{\R}{\mathbb R}

\newtheorem{rem}{Remark}

\newtheorem{prop}{Proposition}

\newtheorem{example}{Example}

\newtheorem{corollary}{Corollary}

\title[Reversibility and involutions]
{Reversible Markov kernels \\ and involutions on product spaces}

\author[M.\,Piccioni] {Mauro Piccioni}
\address{University of Rome La Sapienza, Department of Mathematics, Piazzale Aldo Moro, 5, 00185, Rome, Italy}
\email{mauro.piccioni@uniroma1.it}

\author[J.\,Weso{\l}owski]{Jacek Weso\l owski}
\address{Warsaw University of Technology, Faculty of Mathematics and Information Science, Koszykowa 75, 00-602, Warsaw, Poland}
\email{jacek.wesolowski@pw.edu.pl}

\begin{document}
	
	\begin{abstract}
		In this paper the relations between independence preserving (IP) involutions and reversible Markov kernels are investigated. We introduce an involutive augmentation $\mathbf H=(f,g_f)$ of a measurable function $f$ and relate the IP property of $\mathbf H$ to $f$-generated reversible Markov kernels. Various examples appeared in the literature are presented as particular cases of the construction. In particular, we prove that the IP property generated by the (reversible) Markov kernel of random walk with a reflecting barrier at the origin  characterizes  geometric-type laws.
	\end{abstract}
	
	\maketitle
	
	\medskip \noindent
	\emph{AMS MSC 2020:} 60J05, 60G10, 60E05, 62E10.
	
	\medskip \noindent
	\emph{Keywords:}  Independence preserving maps, Reversible Markov chains, Burke's property, Involutions.
	
	\section{Introduction}\label{intro}
	
	In a number of  instances,  a measurable function $\bf{H}$ is specified on a  measurable product space  $\mathcal X \times \mathcal U$,  with the following properties: 
	
	i) $\bf{H}$ is an involution on $\mathcal X \times \mathcal U$ onto itself, i.e. it is invertible with $\bf {H}^{-1}=\bf{H}$;
	
	ii) $\bf{H}$ preserves a product measure $\mu \otimes \nu$ on $\mathcal X \times \mathcal U$.
	
	More generally there has been a revival of interest in independence preserving (IP) maps, i.e. maps $F:\mathcal X \times \mathcal U \to\widetilde{\mathcal X} \times \widetilde{\mathcal U}$ for which there exists a quadruple of probability measures $\mu$, $\nu$, $\widetilde{\mu}$, $\widetilde \nu$ on respective spaces, such that $F(\mu\otimes\nu)=\widetilde{\mu}\otimes\widetilde{\nu}$. They play a key role in models of integrable probability, such as: random lattice equation models, see e.g. Croydon and Sasada \cite{CroSas20}, random polymer models, see e.g. Sepp\"al\"ainen \cite{Sep12}, Chaumont and Noack \cite{ChaNoa18} and references therein, and random walks on symmetric matrices, see e.g. Arista, Bisi and O'Connell \cite{AriBisOCo24}. In particular, Sasada and Uozumi \cite{SasUoz22} identified recently an interesting family of involutive IP maps, related to so-called $[2:2]$ quadrirational Yang-Baxter maps. 
	
	The goal of this paper is to relate involutive IP maps to reversible Markov transition kernels.
	
	Denote by $\mathcal F$ a $\sigma$-algebra of subsets of $\mathcal X$. A Markov transition kernel $K:\mathcal X\times \mathcal F$ is a function such that $K(\cdot,B)$ is $\mathcal F$-measurable for any $B\in \mathcal F$ and $K(x,\cdot)$ is a probability measure on $\mathcal X$ for all $x\in\mathcal X$. Next let $\mu$ be a probability measure of $\mathcal X$. Recall that a Markov transition kernel $K$ is said to be $\mu$-reversible iff 
	$$
	\mu(dx)\,K(x,dy)=K(y,dx)\,\mu(dy),\quad x,y\in\mathcal X^2.
	$$
	
	In the following we will be interested in transition kernels $K$ generated  by a family of random functions $\{f(x,U), x \in \mathcal X\}$, meaning that
	\begin{equation}\label{ker}
		K(x,\cdot)=\mathrm{Pr}(f(x,U) \in \cdot),\quad x \in \mathcal {X},
	\end{equation}
	where $U$ is a $\nu$ distributed random variable. Clearly, a transition kernel $K$, defined through \eqref{ker}, is $\mu$-reversible iff for $X\sim \mu$, independent of $U$, we have 
	\begin{equation}\label{xfu}(X,f(X,U))\stackrel{d}{=}(f(X,U),X).\end{equation}
	
	Notice that assumption i) alone trivially implies that any probability measure on the product space $\mathcal X \times \mathcal U$ (including any product measure $\mu \otimes \nu$) is preserved by $\mathbf H^2$, this being equal to the identity. But once $\bf{H}$ has the additional property ii), taking $X$ and $U$ independent random elements with laws $\mu$ and $\nu$ respectively and setting $(Y,V)={\bf{H}} (X,U)$, we have by i)
	$$
	(X,U;Y,V)=({\bf{H}} ^{-1} (Y,V);Y,V)=({\bf{H}} (Y,V);Y,V),
	$$
	and by ii) that $(X,U)$ and $(Y,V)$ are identical in law. This entails the following equality in law 
	$$
	(X,U;Y,V)\stackrel{d}{=}({\bf{H}} (X,U);X,U)=(Y,V;X,U),
	$$
	implying that $(X,U)$ is exchangeable with $(Y,V)$. Writing ${\bf{H}} =(f,g)$, the above relation implies
	\eqref{xfu}, i.e. $K$ is $\mu$-reversible with $K$ defined as in \eqref{ker}. 
	
	Here is the first example, that aroused our interest on the subject.
	
	\begin{example} The Matsumoto-Yor example. With $\mathcal {X}=\mathcal {U}=\mathbb {R}_+$ define 
		$$
		f(x,u)=\frac {1}{x+u},\,\,\,\, g(x,u)=\frac {1}{x}-\frac {1}{x+u}.
		$$
		It is immediately checked that $\mathbf H=(f,g)$ is an involution on $\mathbb R_+^2$. Matsumoto and Yor in \cite{MatYor01} proved that $\mathbf H$ preserves the product law $\phi_{\alpha, \lambda} \otimes \gamma_{\alpha, \lambda}$, where
		\begin{equation}\label{gig}
			\phi_{\alpha, \lambda}(dx)= C(\alpha, \lambda) x^{-\alpha-1}\exp \{-\lambda (x+x^{-1})\}dx,\,\,\, x>0,
		\end{equation}
		$C(\alpha, \lambda)$ being a suitable normalizing constant, and
		$\gamma_{\alpha, \lambda}$ is the gamma distribution with shape parameter $\alpha$ and rate parameter $\lambda$, with $\alpha, \lambda >0$. Letac and Weso\l owski \cite{LetWes00}, proved that this property characterizes the pair of distributions. According to the standard parametrization \eqref{gig} is a GIG distribution with parameters $(-\alpha, \sqrt{2\lambda}, \sqrt {2\lambda})$). As a consequence the transition kernel generated by $\{f(x,U), x>0\}$, with $U \sim \gamma_{\alpha, \lambda}$, defined by 
		$$
		K(x,A)=\mathrm{Pr}(f(x,U) \in A)=\int_{A^{-1}-x}\gamma_{\alpha, \lambda}(y)dy,\,\,\, x>0,
		$$
		for any Borel set $A\subset \mathbb R_+$ is $\phi_{\alpha, \lambda}$-reversible; here $A^{-1}=\{v\in\mathbb R_+:\;\tfrac{1}{v}\in A\}$. 
	\end{example}
	
	Summing up, we see that any involutive IP map $\mathbf H=(f,g)$ generates a reversible transition kernel of the form \eqref{ker}. The goal of this note is to present a general result going in the opposite direction, aiming to build  IP maps  from reversible transition kernels of the form \eqref{ker}. The result relies on a set-theoretical construction, the involutive augmentation of a function $f: \mathcal {X} \times \mathcal {U} \to \mathcal {X}$  defined on a product space, presented in Section 2, where it is illustrated by a few relevant examples. In Section $3$ the relation with the reversibility of transition kernels is explored. In Section $4$ the reflected random walk example is studied in detail:  here the reader can also find a characterization of the product measures preserved by the corresponding  involutive augmentation.  Section $5$ is finally devoted to discuss a somewhat canonical choice of the involution, which is shown to be a particular instance of the so-called Rosenblatt transformation.
	
	\section{The basic construction}
	
	The following purely set-theoretical result guarantees that under some rather simple conditions a function $f: \mathcal {X} \times \mathcal {U} \to \mathcal {X}$ has an \emph{involutive augmentation} ${\bf{H}} =(f,g)$ on the product space $\mathcal {X} \times \mathcal {U}$. Here is the  construction.
	
	Define first the $f$-accessible set and the $f$-uniquely accessible set as
	$$
	A_f=\{(x,y)\in \mathcal {X}^2: \,\,\exists u \in \mathcal {U}\,\,\ s.t. \,\, y=f(x,u)\},
	$$
	$$
	U_f=\{(x,y)\in A_f: \,\,\exists ! u \in \mathcal {U}\,\,\ s.t. \,\, y=f(x,u)\},
	$$
	respectively. Next, for $(x,y) \in U_f$ we define $\sigma (x,y)$ to be the unique solution in $u$ of the equation $y=f(x,u)$, that is for all $(x,y)\in U_f$ and $u\in\mathcal U$
	\begin{equation}\label{sigma}
		\sigma(x,y)=u\quad\mbox{iff}\quad y=f(x,u).
	\end{equation}	
	
	\begin{prop}\label{basic} Consider a function $f: \mathcal {X} \times \mathcal {U} \to \mathcal {X}$ with the properties:
		
		i) $A_f$ is symmetric, i.e. $(x,y) \in A_f$ if and only if $(y,x) \in A_f$;
		
		ii) $A_f \setminus U_f \subset D:=\{(x,x): x \in \mathcal {X}\}$, the diagonal in $\mathcal X^2$.
		
		\noindent
		In this case denote
		$$
		R_f=\{(x,u) \in \mathcal X \times \mathcal U: (x,f(x,u)) \in U_f\}
		$$
		and define $g_f: \mathcal {X} \times \mathcal {U} \to \mathcal {U}$  by
		\begin{equation}\label{gfu}
			g_f(x,u)=\left\{\begin{array}{ll}\sigma(f(x,u),x), & (x,u) \in R_f,\\
				u, & \mbox{otherwise}.\end{array}\right.   
		\end{equation}
		
		Then $(f,g_f)$ is an involution on $\mathcal X \times \mathcal U$. Conversely, if $(f,g)$ is an involution on $\mathcal X \times \mathcal U$, then $g\vert_{R_f}=g_{f}\vert_{R_f}$.
	\end{prop}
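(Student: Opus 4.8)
The plan is to verify the involution identity $(f,g_f)\circ(f,g_f)=\mathrm{id}$ pointwise, splitting the argument according to whether the starting point lies in $R_f$, and then to extract the converse from the very same computation.

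First I would isolate the one structural fact on which everything hinges: \emph{if $(x,y)\in U_f$ then $(y,x)\in U_f$ as well}. This is the step where both hypotheses genuinely enter, and I expect it to be the main obstacle; once it is secured the rest is bookkeeping with the defining property \eqref{sigma} of $\sigma$. Symmetry of $A_f$ (property i) gives $(y,x)\in A_f$. Then either $x=y$, in which case $(y,x)=(x,y)\in U_f$ trivially, or $x\ne y$, so that $(y,x)\notin D$, and property ii, namely $A_f\setminus U_f\subset D$, forces $(y,x)\in U_f$.

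Next, fix $(x,u)$ and set $(y,v)=(f(x,u),g_f(x,u))$; I must show $(f(y,v),g_f(y,v))=(x,u)$. In the case $(x,u)\in R_f$ we have $(x,y)\in U_f$, hence $\sigma(x,y)=u$, and by the lemma $(y,x)\in U_f$, so that $v=g_f(x,u)=\sigma(y,x)$ is well defined; unwinding \eqref{sigma} yields $f(y,v)=x$. That identity shows $(y,v)\in R_f$, since its associated $f$-image pair is $(y,f(y,v))=(y,x)\in U_f$, whence $g_f(y,v)=\sigma(f(y,v),y)=\sigma(x,y)=u$, closing the loop. In the complementary case $(x,u)\notin R_f$, the point $y=f(x,u)$ witnesses $(x,y)\in A_f$, while $(x,y)\notin U_f$; thus $(x,y)\in A_f\setminus U_f\subset D$, so $y=x$ and $v=g_f(x,u)=u$. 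Therefore $(y,v)=(x,u)$ is a fixed point of $(f,g_f)$ and the identity holds automatically.

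For the converse, suppose $(f,g)$ is any involution and take $(x,u)\in R_f$, writing $y=f(x,u)$ and $w=g(x,u)$. Applying the involution to $(y,w)=(f,g)(x,u)$ gives $f(y,w)=x$ and $g(y,w)=u$. Since $(x,u)\in R_f$ means $(x,y)\in U_f$, the lemma again yields $(y,x)\in U_f$, so the equation $f(y,\cdot)=x$ admits the unique solution $\sigma(y,x)$; comparing with $f(y,w)=x$ forces $w=\sigma(y,x)=g_f(x,u)$. Hence $g=g_f$ on $R_f$, as claimed. The only point requiring sustained care is the simultaneous use of (i) and (ii) to keep the relevant pairs inside $U_f$, so that $\sigma$ may legitimately be applied at each stage.
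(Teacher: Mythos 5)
Your proof is correct and follows essentially the same route as the paper's: establish that $U_f$ is symmetric from hypotheses (i) and (ii), split on membership in $R_f$, use the defining property \eqref{sigma} of $\sigma$ to close the involution identity, and derive the converse from uniqueness of the solution $\sigma(y,x)$. The only differences are expository — you spell out the symmetry of $U_f$ and the verification that $(y,v)\in R_f$, which the paper asserts implicitly — so there is nothing to add.
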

	
	\begin{proof} 
		Suppose first $(x,u) \in R_f$, hence $(x,f(x,u))\in U_f$. By i) and ii) $U_f$ is symmetric. Therefore, $(f(x,u),x)\in U_f$, so  
		\begin{equation}\label{ffu}
			f(f(x,u),g_f (x,u))=f(f(x,u),\sigma(f(x,u),x))=x.
		\end{equation}
		
		Moreover,
		$$
		g_f(f(x,u),g_f(x,u))=\sigma(f(f(x,u),g_f(x,u)),f(x,u))=\sigma(x,f(x,u))=u.
		$$
		
		For $(x,u)\notin R_f$ we have $(x,f(x,u))\in A_f\setminus U_f$, i.e. in view of ii), $f(x,u)=x$. Consequently,
		$$
		f(f(x,u),g_f(x,u))=f(f(x,u),u)=f(x,u)=x
		$$ 
		and
		$$g_f(f(x,u),g_f(x,u))=g_f(f(x,u),u)=g_f(x,u)=u.$$
		
		On the other hand, if $(f,g)$ is an involution then 
		$$
		f(f(x,u),g(x,u))=x
		$$
		and thus for $(x,u)\in R_f$, i.e. $(x,f(x,u))\in U_f$, in view of symmetry of $U_f$ we have $(f(x,u),x)\in U_f$. Thus, definition \eqref{sigma} implies $g(x,u)=\sigma(f(x,u),x)$. 
	\end{proof}
	
	\begin{example} Example 1, continued. Let $f(x,u)=\frac {1}{x+u}$, for $(x,u)\in \mathbb R_+^2$. For $(x,y)\in\R_+^2$ there exists $u\in\mathbb R_+$ such that $y=\tfrac{1}{x+u}$  iff $xy<1$. Moreover, such  a $u$ is unique. Therefore,
		$$
		A_f=U_f=\{(x,y) \in \mathbb {R}_+^{2}: xy<1\}.
		$$
		is symmetric. For any $(x,y) \in U_f$, we see that $\sigma(x,y)=\tfrac {1}{y}-x$. Moreover for any $(x,u)\in\R_+^2$ we have $(x,\tfrac{1}{x+u})\in U_f$, i.e. $R_f=\R_+^2$, and \eqref{gfu} yields
		$$
		g_f(x,u)=\sigma(f(x,u),x)=\sigma(\tfrac{1}{x+u},x)=\tfrac{1}{x}-\tfrac{1}{x+u},\quad \forall (x,u)\in\R_+^2.
		$$
		So the Matsumoto-Yor involution ${\bf{H}}=(\tfrac{1}{x+u},\tfrac{1}{x}-\tfrac{1}{x+u})$ is the unique involutive augmentation of $f(x,u)=\tfrac{1}{x+u}$. 
		
		On the other hand, defining  $\tilde f(x,u)=\tfrac{1}{u}-\tfrac{1}{x+u}=\tfrac{x}{u(x+u)}$, $(x,u)\in\mathbb R_+^2$, we see that for any $x,y>0$ there exists a unique $u>0$ such that $y=\tfrac{x}{u(x+u)}$, hence $A_{\tilde f}=U_{\tilde f}=R_{\tilde f}=\R_+^2$. Indeed, for any $(x,y)\in\mathbb R_+^2$ we have 
		$$\sigma(x,y)=\tfrac{\sqrt{xy(4+xy)}-xy}{2y}.$$
		As a consequence \eqref{sigma} yields
		\begin{align*}
			g_{\tilde f}(x,u)&=\sigma(\tilde f(x,u),x)=\sigma(\tfrac{x}{u(x+u)},x)=\tfrac{\sqrt{\tfrac{x^2}{u(x+u)}\left(4+\tfrac{x^2}{u(x+u)}\right)}-\tfrac{x^2}{u(x+u)}}{2x}\\
			&=\tfrac{\sqrt{4u(x+u)+x^2}-x}{2u(x+u)}=\tfrac{1}{x+u}.
		\end{align*}
		So the swapped Matsumoto-Yor involution ${\tilde {\bf{H}}}=(\tfrac{1}{u}-\tfrac{1}{x+u},\,\tfrac{1}{x+u})$ is the unique involutive augmentation of $\tilde f(x,u)=\tfrac{1}{u}-\tfrac{1}{x+u}$. 
	\end{example}
	
	\begin{example}
		Let $\mathcal X=\mathcal U=\Omega_+$, the cone of symmetric real positive definite matrices of fixed dimension. Let $f:\Omega_+^2\to\Omega_+$ be defined by
		$$
		f(x,u)=\mathbb P_{(\mathrm I+x)^{-1/2}}(u),\quad x,u\in\Omega_+,
		$$
		where $x^{1/2}$ is the unique element of $\Omega_+$, such that $x^{1/2}x^{1/2}=x$,  $\mathbb P_x(y)=xyx$ is the so-called quadratic representation of the respective Jordan algebra and $\mathrm I$ is the identity matrix. Then $A_f=U_f=\Omega_+^2$ and 
		$$
		\sigma(x,y)=\mathbb P_{(\mathrm I+x)^{1/2}}(y), \quad (x,y)\in \Omega_+^2.
		$$
		Following \eqref{gfu} we obtain
		$$
		g_f(x,u)=\sigma(f(x,u),x)= \mathbb P_{(\mathrm I+f(x,u))^{1/2}}(x)=\mathbb P_{(\mathrm I+\mathbb P_{(\mathrm I+x)^{-1/2}}(u))^{1/2}}(x),\quad (x,u)\in\Omega_+^2.
		$$
		Consequently, $\mathbf H:\Omega_+^2\to\Omega_+^2$
		$$
		{\mathbf H}(x,u)=\left(\mathbb P_{(\mathrm I+x)^{-1/2}}(u),\;\mathbb P_{(\mathrm I+\mathbb P_{(\mathrm I+x)^{-1/2}}(u))^{1/2}}(x)\right)
		$$
		is the involutive augmentation of $f$. 
		
		Note that $\mathbf H$ is an IP involution  for Wishart and Kummer random matrices, and a related characterization is also available (see \cite{KolPil20}).
	\end{example}
	
	\begin{rem}\label{KdV} The symmetry of $A_f$ is clearly a necessary condition for the existence of an involutive augmentation $\bf {H}$ $=(f,g)$ of a function $f: \mathcal X \times \mathcal U \to \mathcal X$, but  the remaining part of the assumption in Proposition \ref{basic} is far from being necessary. 
		
		Consider as an example the mapping 
		$$
		f(x,u)=u-(x+u)^+=u\wedge(-x),
		$$ 
		with $\mathcal X=\mathcal U =\mathbb Z$. Observe that $A_f=\{(x,y):x+y \leq 0 \}$ is symmetric, hence assumption i) of Proposition 1 is satisfied. However, this is not true for assumption ii). Indeed, whereas for a given $y<-x$ the equation $f(x,u)=y$ has the unique solution $u=y$, for $y=-x$ any $u \geq -x$ is a solution the equation $f(x,u)=y$. In order to make the function ${\bf{H}}=(f,g)$ an involution on $\mathbb Z^2$ one needs
		\begin{equation}\label{possible}
			g(x,u)=x,\,\,\, x+u<0,\,\,\, g(x,u)=x+s(x,u)\geq x,\,\,\, x+u\geq 0,
		\end{equation}
		$s(x,u)$ being a non-negative function defined on the pairs $(x,u)$ such that $x+u \geq 0$, and on this domain it has to satisfy
		\begin{equation}\label{necandsuff}
			s(-x,x+s(x,u))=x+u=(x+u)^+    
		\end{equation}
		The choice $s(x,u)=(x+u)^+$, entails
		$$
		g(x,u)=x+(x+u)^+=:g_1(x,u),
		$$
		which corresponds to the so-called \emph{ultra-discrete KdV (Korteweg de Vries) equation} with parameters $J=0$ and $K=+\infty$, discussed in Croydon and Sasada, \cite{CroSas20}, page 14, where the product measures preserved by this map are also described. 
		Note that if $h$ is an involution on the set of non-negative integers  $\mathbb N_0$, then $s(x,u)=h((x+u)^+)$ is a legitimate choice for $s$.  For example one can take $h$ defined by 
		$$
		h(n)=n-(-1)^n+\mathbf 1_{\{0\}}(n),\quad n\in \mathbb N_0,
		$$
		with the corresponding involutive augmentation $(f,g)$
		$$
		f(x,u)=u \wedge (-x),\,\, g_2(x,u)=x+(x+u)^+-(-1)^{(x+u)^+}+\mathbf 1_{\{0\}}((x+u)^+),\quad x,u\in \mathbb Z.
		$$
	\end{rem}

	\section{Reversibility and involutions}
	
	In this section probabilities enter the play.
	
	\begin{prop}\label{twobasic} Suppose that a family of random functions $\{f(x,U), x \in \mathcal X\}$,  where $U$ has distribution $\nu$,  generates as in \eqref{ker} a $\mu$-reversible kernel $K$, where $\mu$ is a probability measure on $\mathcal X$. In addition, suppose that $f$ satisfies the assumptions of Proposition \ref{basic} and that the function $g_f$ constructed therein is measurable. Then the involution $\mathbf H$ $=(f,g_f)$ preserves the product law $\mu \otimes \nu$.
	\end{prop}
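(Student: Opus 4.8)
The plan is to verify that $\mathbf H$ preserves $\mu\otimes\nu$ by showing $\mathbb E[\psi(\mathbf H(X,U))]=\mathbb E[\psi(X,U)]$ for every bounded measurable $\psi$ on $\mathcal X\times\mathcal U$, where $(X,U)\sim\mu\otimes\nu$. Setting $Y=f(X,U)$, the $\mu$-reversibility of $K$ is, by \eqref{xfu}, the single exchangeability relation $(X,Y)\stackrel{d}{=}(Y,X)$ on $\mathcal X^2$. The key idea is to transport the action of $\mathbf H$ onto the pair $(X,Y)$, where it should reduce to the coordinate swap $(x,y)\mapsto(y,x)$; invariance of $\mu\otimes\nu$ under $\mathbf H$ would then be exactly this exchangeability. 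Since $u\mapsto f(x,u)$ is invertible only on $R_f$, I would split according to whether $(X,U)$ lies in $R_f$ or not.

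When $(X,U)\notin R_f$, assumption ii) of Proposition \ref{basic} forces $f(X,U)=X$, so that $\mathbf H(X,U)=(f(X,U),g_f(X,U))=(X,U)$; hence $\psi(\mathbf H(X,U))\,\mathbf 1_{R_f^c}(X,U)=\psi(X,U)\,\mathbf 1_{R_f^c}(X,U)$ identically, and this part of the two expectations agrees with no further work.

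On $R_f$ the substance appears. There $(X,Y)\in U_f$, and since $Y=f(X,U)$ admits the unique preimage $U=\sigma(X,Y)$, while $\mathbf H(X,U)=(Y,\sigma(Y,X))$ by \eqref{gfu}, I would introduce $G(x,y):=\psi(x,\sigma(x,y))\,\mathbf 1_{U_f}(x,y)$ and express both integrands through $(X,Y)$ alone. Using $\mathbf 1_{R_f}(X,U)=\mathbf 1_{U_f}(X,Y)$ together with the symmetry of $U_f$ (established within the proof of Proposition \ref{basic}), one finds $\psi(X,U)\,\mathbf 1_{R_f}(X,U)=G(X,Y)$ and $\psi(\mathbf H(X,U))\,\mathbf 1_{R_f}(X,U)=G(Y,X)$. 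The exchangeability $(X,Y)\stackrel{d}{=}(Y,X)$ then gives $\mathbb E[G(X,Y)]=\mathbb E[G(Y,X)]$, which is precisely the equality of the $R_f$ contributions; together with the previous paragraph this establishes the claim.

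The delicate point, and the one I expect to be the main obstacle, is purely measure-theoretic: the test function $G$ depends on $\sigma(x,y)$ over $U_f$, so one must know that $\sigma$ is measurable there. I would obtain this from the observation that $(x,u)\mapsto(x,f(x,u))$ is a measurable bijection of $R_f$ onto $U_f$ with inverse $(x,y)\mapsto(x,\sigma(x,y))$; measurability of this inverse (e.g.\ for standard Borel spaces, by the Lusin--Souslin theorem) yields measurability of $\sigma$ on $U_f$, and is in line with the standing hypothesis that $g_f$ is measurable.
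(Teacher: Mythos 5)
Your proof is correct and takes essentially the same route as the paper's: split off an exceptional set on which $\mathbf H$ acts as the identity, and on its complement rewrite both $(X,U)$ and $\mathbf H(X,U)$ as the same function of $(X,Y)$ and of $(Y,X)$ respectively via $\sigma$, so that reversibility --- the exchangeability $(X,Y)\stackrel{d}{=}(Y,X)$ of \eqref{xfu} --- yields the equality of laws. The only differences are minor: the paper splits along $F=\{(x,u): f(x,u)=x\}$ rather than along $R_f^c$ (the two sets differ only where $\mathbf H$ is the identity anyway, since on $F\cap R_f$ uniqueness forces $g_f(x,u)=\sigma(x,x)=u$), and your explicit attention to the measurability of $\sigma$ on $U_f$ (via Lusin--Souslin in the standard Borel setting) addresses a point the paper leaves implicit.
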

	
	\begin{proof}  Let $(X,U)$ be distributed according to $\mu \otimes \nu$, and define $(Y,V)={\bf H}(X,U)$, where $\bf {H}$=$(f, g_f)$. Recall that when $(x,y) \in A_f \setminus D$ then $y=f(x,u)$ with $u=\sigma(x,y)$ and $x=f(y,v)$ with $v=\sigma(y,x)$. Finally call $F=\{(x,u) \in \mathcal X \times \mathcal U: f(x,u)=x\}$.
		
		For an arbitrary measurable set $B\subset\mathcal X \times \mathcal U$ we have
		$$
		\mathrm{Pr}((X,U)\in B)=\mathrm{Pr}((X,U)\in B\cap F^c)+\mathrm{Pr}((X,U)\in B\cap F).
		$$
		Note that for $(x,u)\in F^c$ we have $(x,y) \notin D$, and thus
		\begin{align*}
			\mathrm{Pr}((X,U)\in B\cap F^c)=&\mathrm{Pr}((X,\sigma(X,Y))\in B\cap F^c)=\mathrm{Pr}((X,\sigma(X,f(X,U)))\in B\cap F^c)\\
			{=}&\mathrm{Pr}((f(X,U),\sigma(f(X,U),X))\in B\cap F^c)\\
			=&\mathrm{Pr}((Y,\sigma(Y,X))\in B\cap F^c)=\mathrm{Pr}((Y,V)\in B\cap F^c),
		\end{align*}
		where the third equality above is a consequence of the reversibility assumption and the independence of $X$ and $U$, see \eqref{xfu}.
		
		Since $(f,g_f)$ is the identity map on $F$ we get
		$$
		\mathrm{Pr}((X,U)\in B\cap F)=\mathrm{Pr}((f(X,U), g_f (X,U))\in B\cap F)=\mathrm{Pr}((Y,V)\in B\cap F).
		$$
		Consequently, $(X,U)\stackrel{d}{=}(Y,V)$ and the result follows. 
	\end{proof}
	
	\begin{rem} Let $K$ be a transition kernel on $\mathcal X$ generated by a family of random functions $\{f(x,U),\, x \in \mathcal X\}$  (see \eqref{ker}),  where the random variable $U \sim \nu$ takes values in $\mathcal U$. Let $\mu$ be a $K$-stationary distribution, i.e. $\mu(dy)=\int\,K(x,dy)\,\mu(dx)$. Let the i.i.d. sequence $(U^t,\, t \in \mathbb {N}_0)$ be drawn from $\nu$, and let $X^0$ be an independent $\mu$-distributed random variable. The recursion 
		\begin{equation}\label{prima}
			X^{t+1}=f(X^{t},U^{t}),\,\, t \in \mathbb {N}_0,
		\end{equation}
		defines a stationary (i.e. translation invariant) Markov process $(X^t, t \in \mathbb N_0)$ with transition kernel $K$. If the assumptions of Proposition \ref{twobasic} hold, the function $g_f$ is well defined, and the sequence 
		\begin{equation}\label{seconda}
			V^{t}=g_f(X^{t}, U^{t}),\,\, t \in \mathbb {N}_0,
		\end{equation}
		has the property that $V^{t} \sim \nu$ and it is independent of $X^{t+1}$, for $t \in \mathbb {N}_0$. In particular $V^0=g_f(X^0, U^0)$, $X^1$ and $(U^t, t \in \mathbb {N})$, are independent. Note that this set of random variables  is used for  constructing  random variables  $X^{t}$ and $V^t$ according to \eqref{prima} and \eqref{seconda}, for all $t \in \mathbb {N}$. With a straightforward recursive argument we conclude that $V^t$ are i.i.d. $\nu$-distributed for $t \in \mathbb N_0$, and thus $(U^t,\, t \in \mathbb {N}_0)\stackrel{d}{=}(V^t,\, t \in \mathbb {N}_0)$. Consequently, the latter can replace the former  in \eqref{prima}, in order to construct a distributional copy of $(X^t, t \in \mathbb N_0)$. 
		
		This construction can be better visualized by defining a $\mathcal X \times \mathcal U$-valued random field $((X_n^t, U_n^t), (n,t) \in \mathbb {N}^2)$ evolving according to
		\begin{equation}\label{crosas}
			(X^{t+1}_n,U^{t}_{n})= \mathbf H (X^{t}_n, U^{t}_{n-1}), \, (n,t) \in \mathbb N \times \mathbb N_0,
		\end{equation}
		where $\mathbf H=(f,g_f)$ and with the initial values $(X_n^0, n \in \mathbb N)$ i.i.d. drawn from $\mu$, and  $(U_{0}^{t},\, t \in \mathbb {N}_0)$ i.i.d. drawn from $\nu$, independently. Indeed, renaming  $(U^t,\,X^t,\,V^t)$ as $(U^{t}_0,\,X^t_1,\, U^{t}_{1})$ for $t \in \mathbb N_0$, the relation \eqref{crosas} for $n=1$ is rewritten as
		\begin{equation}\label{rosas}
			(X^{t+1},V^t)=\mathbf H(X^{t},U^t), t \in \mathbb N_0,
		\end{equation}
		which is nothing but \eqref{prima} and \eqref{seconda} written together. From the above discussion it follows that for any $n \in \mathbb N$ the column $(X_n^t,\, t \in \mathbb N_0)$ is a stationary Markov chain with transition kernel $K$  generated by $f$  as in  \eqref{ker}, marginal distribution $\mu$ and the column $(U^{t}_{n},\, t \in \mathbb N_0)$ has i.i.d. $\nu$-distributed components. 
		
		Incidentally, since the construction can be presented in an entirely equivalent way by exchanging rows with columns, it is easily established that for any $t \in \mathbb N_0$ the row $(X_n^t,\, n \in \mathbb N)$ has i.i.d. $\mu$-distributed components, and the row $(U^t_n,\, n \in \mathbb N_0)$ is a stationary Markov chain with marginal distribution $\nu$ and with the transition kernel on $\mathcal {U}$ generated  by the family of random functions $(g_f(X,u), u \in \mathcal U)$, with $X \sim \mu$. Moreover, by the translation invariance of the construction it is possible to construct a translation invariant random field on the whole lattice $\mathbb {Z}^2$ by means of Kolmogorov extension theorem, supported by configurations satisfying \eqref{crosas} for $(n,t) \in \mathbb {Z}^2$.
		
		This is essentially the content of part a) of Proposition 2.9 in Croydon and Sasada \cite{CroSas20}, who consider the random field $((X_n^t, U_n^t), (n,t) \in \mathbb {Z}^2)$ defined through \eqref{crosas}. This paper addresses mainly the problem of uniqueness of solutions of \eqref{crosas} for given initial random sequence $(X^0_n, n \in \mathbb Z)$. It appears that the above described invariance/independence properties, called Burke's property (see Section 2.2 therein), are essential in this context. 
	\end{rem}
	
	\begin{example}\label{ex:char} By the general arguments in Section \ref{intro}, taking into account the Matsumoto-Yor property, the kernel associated to the family of random functions $\{\frac {1}{x+U}, x>0\}$, with $U  \sim \gamma_{\alpha, \lambda}$ is $\phi_{\alpha, \lambda}$-reversible. On the other hand, by the characterization in \cite{LetWes00} and Proposition \ref{twobasic}, when $U$ is not gamma distributed, $K$ is never reversible: there is no law $\mu$ that makes it such. 
	\end{example}
	
	Characterization results of the kind described in Example \ref{ex:char} (see e.g. \cite{LW2024}, \cite{KW2024} for more recent ones) allow to determine the reversibility of certain transition kernels of the form \eqref{ker} without explicit calculations. 
	
	\begin{corollary}\label{fgf} Suppose that $f: \mathcal X \times \mathcal U \to \mathcal X$ is measurable and satisfies the assumption of Proposition \ref{basic}, with the function $g_f$ measurable as well. Let $\nu$ be a distribution on $\mathcal U$. There exists a distribution $\mu$ on $\mathcal X$ such that $\mu\otimes\nu$ is invariant under $\mathbf H=(f, g_f)$ if and only if  the transition kernel
		$$
		K(x,\cdot)=\mathrm{Pr}(f(x,U) \in \cdot)
		$$
		with $U \sim \nu$, is $\mu$-reversible.
	\end{corollary}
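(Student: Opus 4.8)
The plan is to recognize that this corollary simply assembles the two results already established, so I would fix an arbitrary probability measure $\mu$ on $\calX$ and prove the equivalence ``$\mu\otimes\nu$ is invariant under $\mathbf H$ $\iff$ $K$ is $\mu$-reversible'' for that fixed $\mu$; the existential phrasing of the statement then follows at once by ranging over $\mu$. Throughout, the standing hypotheses are that $f$ satisfies the assumptions of Proposition \ref{basic} (so that the involutive augmentation $\mathbf H=(f,g_f)$ exists and is an involution) and that $g_f$ is measurable (so that $\mathbf H$ is a genuine measurable map and $(Y,V)=\mathbf H(X,U)$ is a well-defined random element).

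For the direction asserting that $\mu$-reversibility of $K$ forces invariance of $\mu\otimes\nu$, I would invoke Proposition \ref{twobasic} directly. Its hypotheses coincide exactly with the standing assumptions here together with the assumed reversibility, so it delivers immediately that $\mathbf H=(f,g_f)$ preserves $\mu\otimes\nu$, with no additional computation required.

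For the converse, I would reproduce the general argument already laid out in the Introduction. Taking $X\sim\mu$ and $U\sim\nu$ independent and setting $(Y,V)=\mathbf H(X,U)$, the involution property $\mathbf H^{-1}=\mathbf H$ (guaranteed by Proposition \ref{basic}) together with the assumed invariance of $\mu\otimes\nu$ yields the exchangeability $(X,U;Y,V)\stackrel{d}{=}(Y,V;X,U)$. Projecting onto the first coordinates and recalling that $Y=f(X,U)$ gives $(X,f(X,U))\stackrel{d}{=}(f(X,U),X)$, which is precisely the reversibility criterion \eqref{xfu} for the kernel $K$ defined in \eqref{ker}. Hence $K$ is $\mu$-reversible.

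I expect essentially no obstacle, since the corollary is a packaging of Proposition \ref{twobasic} (one direction) and the Introduction's exchangeability computation (the other). The one point deserving a line of care is the converse: although its argument appears in the Introduction for a general involutive IP map, it was never isolated as a numbered statement, so I would spell it out explicitly rather than cite it, emphasizing that it is the measurability of $g_f$ that makes $(Y,V)=\mathbf H(X,U)$ a bona fide random element and renders the distributional identities meaningful.
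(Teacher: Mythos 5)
Your proposal is correct and follows exactly the paper's route: the sufficiency direction is delegated to Proposition \ref{twobasic}, and the necessity direction is the exchangeability argument from the Introduction (which the paper's proof merely cites, while you spell it out). No gaps; the extra care you take in writing out the converse explicitly is harmless and matches what the paper implicitly relies on.
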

	\begin{proof}
		Sufficiency is given in Proposition \ref{twobasic} and necessity follows since, as it has already been observed in the introduction,  if $\mathbf H$ is an involution preserving $\mu\otimes\nu$ then \eqref{xfu} holds true. 
	\end{proof}
	
	\begin{example} With $\mathcal X=\mathcal U=(0,1)$ define
		$$
		f(x,u)=\tfrac {1-u}{1-ux}
		$$
		fulfilling the assumption of Proposition \ref{basic}. Then $g_f(x,u)=1-ux$. In \cite{SesWes03} it is proved that the product measures that are preserved by $\mathbf H=(f,g_f)$ form the set 
		$$
		\mathcal I= \{\mathrm{Beta}_I(p,q) \otimes \mathrm{Beta}_I(p+q,p),\;p, q >0\}.
		$$
		(Recall that $\mathrm{Beta}_I(a,b)$ law is defined by the density $\propto x^{a-1}(1-x)^{b-1}\,\mathbf 1_{(0,1)}(x)$.)  
		As a consequence of Remark \ref{fgf}, the kernel $\mathrm{Pr}\left(\tfrac {1-U}{1-xU} \in \cdot\right),\;x \in (0,1)$, with $U \sim \mathrm{Beta}_I(a,b)$, is reversible if and only if $a>b$. 
	\end{example}
	
	\begin{example} The Beta walk. Diaconis and Freedman, \cite{DiaFre99}, studied the following Markov chain. Let $\mathcal X=(0,1)$ and $\mathcal U=\{0,1\}\times (0,1)$. Define $f: \mathcal X \times \mathcal U \to \mathcal X$ as follows
		\begin{equation}\label{dirf}
			f(x,(u_0,u_1))=(1-u_1)x+u_0 u_1.
		\end{equation}
		Sethuraman \cite{Set94}, proved that for independent $X \sim \mathrm{Beta}_I(a_0, a_1)$, $U_0 \sim \mathrm{Bernoulli}(\frac {a_1}{a_0+a_1})$ and $U_1 \sim \mathrm{Beta}_I(1,a_0+a_1)$ the following identity in law holds
		$$
		f(X,(U_0,U_1)) \stackrel{d}{=} X.
		$$
		Generalizations  appeared in \cite{HitLet14}. It is readily verified  that the assumptions of Proposition \ref{basic} hold true for  $f$ defined in \eqref{dirf}. Moreover $A_f=U_f=\{(x,y) \in \mathcal X^2: x \neq y\}$ and
		$$
		\sigma(x,y)=(0,1-\tfrac{y}{x})\mathbf 1_{0<y<x<1}+(1,\tfrac{y-x}{1-x})\mathbf 1_{0<x < y<1},\,\,\, (x,y) \in U_f.
		$$
		Therefore, the function $g_f: \mathcal X \times \mathcal U \to \mathcal U$, defined as 
		$$
		g_f(x,(u_0,u_1))=\sigma(f(x,(u_0,u_1)),x),
		$$
		assumes the form
		\begin{equation}\label{added}
			g_f(x,(u_0,u_1))=\left(1, \frac {u_1x}{1-x+u_1x}\right)(1-u_0)+\left (0,\frac {u_1(1-x)}{u_1(1-x)+x}\right )u_0
		\end{equation}
		and produces the involutive augmentation $\mathbf H=(f,g_f)$ of $f$. 
		
		It is natural to ask if there are product laws which are preserved by $\mathbf H$, i.e. marginal laws for $X$ and $U=(U_0,U_1)$, mutually independent, such that
		\begin{equation}\label{yvv}
			(Y;V_0,V_1):=(f(X,(U_0,U_1)); g_f(X,(U_0,U_1)))\stackrel{d}{=}(X;U_0,U_1). 
		\end{equation}
		It is easily realized that the answer is negative. Indeed, from \eqref{dirf}, \eqref{added} and \eqref{yvv} one gets $1-U_0=V_0\stackrel{d}{=}U_0$ and $YV_0=X(1-U_1)(1-U_0)\stackrel{d}{=}XU_0$. Consequently,
		$$
		\mathbb E\,X\,\mathbb E\,(1-U_0)(1-U_1)=\mathbb E\,X\,\mathbb E\,U_0=\mathbb E\,X\,\mathbb E\,(1-U_0),
		$$
		which implies that either $\mathrm{Pr}(X=0)=1$  or $\mathrm{Pr}(U_1=0)=1$.

		On the other hand it is  directly verified that, under the distributions for $X, U_0$ and $U_1$ assumed by Sethuraman, exchangeability of the joint law of $X$ and $f(X,(U_0,U_1))$ never occurs. This is in agreement with Proposition \ref{twobasic} and the lack of product laws preserved by $\bf {H}$.
	\end{example}
	
	\begin{rem}  Consider $f, g_1, g_2$ as in Remark \ref{KdV}. For $\ell \in 2\mathbb N$ and $0<\theta <1$ define the probability measures $\mu_{\theta, \ell}$ and $\nu_{\theta, \ell}$ defined by 
		\begin{align*}
			\mu_{\theta, \ell} (\{x\})&=\frac{1}{Z(\theta, -\ell, \ell)} \theta^x,\qquad  x\in\{-\ell,-\ell+1,\ldots,\ell\}\\
			\nu_{\theta, \ell}(\{u\})&=\frac{1}{Z(\theta, -\ell, +\infty)} \theta^u,\qquad u\in\{-\ell,-\ell+1,\ldots\}
		\end{align*}
		with 
		$$
		Z(\theta, m, n)=\sum_{m}^{n} \theta^{i},\,\,\, m \in -\mathbb N,\; n \in \mathbb {N} \cup \{+\infty\}.
		$$
		Recall that $(f,g_1)$ and $(f,g_2)$ are involutive augmentations of $f$, but the assumptions of Proposition \ref{basic} does not hold for $f$. The reader can easily verify that the product law $\mu_{\theta, \ell} \otimes \nu_{\theta, \ell}$ is preserved by the involution $(f, g_1)$, but not by the involution $(f,g_2)$. 
		Actually the former fact is mentioned in Croydon and Sasada \cite{CroSas20}, Proposition 3.3, and implies that the kernel $K$, generated by random functions
		$\{f(x,U), x \in \mathbb Z\}$ with $\nu_{\theta, \ell}$-distributed $U$, is $\mu_{\theta, \ell}$-reversible. As a consequence the conclusion of Proposition \ref{twobasic} is still true for the 
		involution $(f, g_1)$ but not for the involution $(f,g_2)$. For a general $\mu$-reversible kernel $K$, generated by  random functions $\{f(x,U), x \in \mathcal X \}$ with $U \sim \nu$, to which Proposition \ref{basic} does not apply, we are not aware of any immediate criterion for the construction of an involutive augmentation of $f$ preserving the product measure $\mu \otimes \nu$.
	\end{rem}
	
	\section{A characterization of independence for a reflecting random walk.}
	
	The next example, which is interesting by itself, explains the reason for allowing multiple solutions of the equation  $f(x,u)=x$ for any fixed $x \in \mathcal {X}$, according to assumption ii) in Proposition \ref{basic}. The example is related to Burke's theorem in the discrete setting, see Keane and O'Connell \cite{KeaOco08}.
	
	\begin{example} Reflecting random walk with a reflecting barrier at the origin. The following transition kernel $\{K(x, \cdot), x \in \mathcal X\}$ is specified on the set $\mathcal X=\mathbb {N}_0$ of non-negative integers
		$$
		K(x,x+1)=p,\quad x \in \mathbb {N}_{0},\quad K(x,x-1)=q,\quad K(x,x)=r,\quad x \in \mathbb {N},\quad K(0,0)=q+r,
		$$
		with $0<p<q$ and $r=1-p-q \geq 0$. It is easily checked that this kernel is $\mu$-reversible with $\mu=\mathrm{geo}(\theta)$ defined by $\mu(x)=(1-\theta) \theta^{x},\;x\in\mathbb {N}_0$, where $\theta=p/q$. Clearly, $K(x,\cdot)=\mathrm{Pr}(f(x,U)\in\cdot)$, where $f(x,u)=(x+U)^+$, $x\in\mathbb N_0$, with $\mathcal U=\{-1,0,1\}$ and $\mathrm{Pr}(U=1)=p$, $\mathrm{Pr}(U=-1)=q$ and $\mathrm{Pr}(U=0)=r$. In view of Proposition \ref{twobasic} we conclude that $(X+U)^+$ has the same geometric law as $X$. The $f$-accessible set is given by $$A_f=\{(x,x\pm 1),\, x\in\mathbb N \;,(0,1),\;(x,x),\;x\in\mathbb N_0\},$$
		unless $r=0$, in which case all the diagonal pairs $(x,x)$, with $x>0$ are removed from $A_f$.
		Moreover, in both cases $U_f=A_f\setminus\{(0,0)\}$
		and $\sigma (x,y)=y-x$ for $(x,y)\in U_f$.  Indeed, the only lack of uniqueness occurs when $x=y=0$, since $f(0,-1)=f(0,0)=0$. Anyway Proposition \ref{basic} can be applied and yields
		\begin{equation}\label{posit}
			g_f(x,u)=\left\{\begin{array}{ll}\sigma ((x+u)^+,x)=-u, & (x,(x+u)^+)\in U_f\color{black}\\
				u, & (x,u)\in\{(0,0),\,(0,-1)\}.\end{array}\right. 
		\end{equation}
		Both for $u=-1$ and $u=0$ the equation
		$$
		f(f(0,u),v)=f(0,v)=v^+=0.
		$$
		has two solutions $v=0, -1$: this means that in order to define $g$ with the property that $(f,g)$ is an involution, there are two options to define $g(0,0)$ and $g(0,-1)$, either with the choice in \eqref{posit}, that gives 
		\begin{equation}\label{firstchoice}
			g_f(0,0)=0,\,\,\, g_f(0,-1)=-1;
		\end{equation}
		or with swapped values. Since $(x+u)^-=0$, except for $x=0, u=-1$, when it is equal to $1$,
		we can write
		$$
		g_f(x,u)=-u-2(x+u)^-, (x,u)\in\mathcal X\times\mathcal U.
		$$
		Applying Proposition \ref{twobasic} we obtain that \begin{itemize}
			\item[i)] $g_f (X,U)=-U-2(X+U)^-$ is independent of $f(X,U)=(X+U)^+$; 
			\item[ii)] $g_f (X,U)$  has the same law as $U$.
		\end{itemize}    
		
		Property {\em i)} characterizes the laws of $X$, as it results from the next proposition, that also singles out the cases in which {\em i)} holds but not {\em ii)}. 
	\end{example}
	
	\begin{prop}
		Let $X$ be an $\mathbb N_0$-valued random variable such that $\mathrm{Pr}(X=0)<1$ and let $U$ be an independent $\{-1,0,1\}$-valued random variable with $\mathrm{Pr}(U=1)=p$, $\mathrm{Pr}(U=0)=r$ and $\mathrm{Pr}(U=-1)=q$ and $pq>0$. If the random variables
		\begin{equation}\label{invo}
			Y=(X+U)^+\quad\mbox{and}\quad V=-U-2(X+U)^-.
		\end{equation}
		are independent then necessarily $p<q$ and
		\begin{enumerate}
			\item[i)] either $r>0$ and $X\sim\mathrm{geo}\left(\tfrac{p}{q}\right)$, in which case 
			\begin{equation}\label{identity}
				(X,U)\stackrel{d}{=}(Y,V);
			\end{equation}
			
			\item[ii)] or $r=0$ and the conditional distributions of $X$ given $X\in 2\mathbb N$ and of $X$ given $X\in 2\mathbb N+1$ are geometric with the failure rate $\rho^2$, i.e. \begin{equation}\label{geom}
				\mathrm{Pr}(X=2k|X\in 2\mathbb N_0)=(1-\rho^2)\rho^{2k}=\mathrm{Pr}(X=2k+1|X\in 2\mathbb N_0+1),\quad k\in\mathbb N_0,
			\end{equation} where 
			$$
			\rho =
			\sqrt {\tfrac {p\,p'}{q\,q'}}\in(0,1)
			$$
			with $p':=\mathrm{Pr}(V=1)$ and $q':=\mathrm{Pr}(V=-1)$. Moreover,
			$$
			\mathrm{Pr}(X \in 2\mathbb N_0+1)=p',\quad \left(\mbox{i.e.}\quad \mathrm{Pr}(X\in 2\mathbb N_0)=q'\right),
			$$
			whereas
			\begin{equation}\label{solve}
				\mathrm{Pr}(Y\in 2\mathbb N_0)=q,\quad\left(\mbox{i.e.}\quad \mathrm{Pr}(Y\in 2\mathbb N_0+1)=p\right).
			\end{equation}
			In case $p'=p$ (i.e. $q'=q$), the  distribution of $X$ is geometric with failure rate $\rho=\frac {p}{q}$, and \eqref{identity} still holds.
		\end{enumerate}
	\end{prop}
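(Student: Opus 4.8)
The plan is to reduce independence of $Y$ and $V$ to a system of recursions for the masses $a_n:=\mathrm{Pr}(X=n)$, $n\in\mathbb N_0$, and then read off the geometric structure. Writing $b_n:=\mathrm{Pr}(Y=n)$ and enumerating the elementary moves $(X,U)\mapsto(Y,V)$ defined in \eqref{invo}, the joint law of $(Y,V)$ splits along the three values of $V$ as
$$\mathrm{Pr}(Y=n,V=1)=a_{n+1}\,q,\qquad \mathrm{Pr}(Y=n,V=0)=a_n\,r\quad(n\ge0),$$
$$\mathrm{Pr}(Y=n,V=-1)=a_{n-1}\,p\ \ (n\ge1),\qquad \mathrm{Pr}(Y=0,V=-1)=a_0\,q.$$
The last entry is the whole point of the reflecting barrier: the pair $(X,U)=(0,-1)$ is the unique place where the naive rule $V=-U$ fails, and it is exactly the non-uniqueness permitted by assumption ii) of Proposition \ref{basic}. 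Summing the three slices recovers $b_n$, so imposing $\mathrm{Pr}(Y=n,V=v)=b_n\,\mathrm{Pr}(V=v)$ is genuinely a factorization constraint, and the analysis naturally divides according to whether $r>0$ or $r=0$.

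Next I would treat the case $r>0$. The $V=0$ slice gives $a_nr=b_nr$, hence $b_n=a_n$, i.e. $Y\stackrel{d}{=}X$. Feeding this into the $V=1$ slice yields $a_{n+1}q=a_n\,\mathrm{Pr}(V=1)$ for every $n\ge0$, so $(a_n)$ is geometric; since it sums to one, necessarily $a_0>0$ (otherwise all $a_n$ vanish) and the ratio $\theta:=\mathrm{Pr}(V=1)/q$ lies in $(0,1)$. The boundary equation $a_0q=a_0\,\mathrm{Pr}(V=-1)$ then forces $\mathrm{Pr}(V=-1)=q$, and since $\mathrm{Pr}(V=-1)=p+qa_0$ this gives $a_0=(q-p)/q$. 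Positivity of $a_0$ forces $p<q$ and pins $\theta=1-a_0=p/q$, i.e. $X\sim\mathrm{geo}(p/q)$. For such $X$ the reflecting-walk kernel is $\mu$-reversible, so Proposition \ref{twobasic} applies and $\mathbf H=(f,g_f)$ preserves $\mu\otimes\nu$; this is precisely the identity \eqref{identity}.

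The case $r=0$ (so $p+q=1$ and $V\in\{-1,1\}$) is where the real work sits. Substituting $b_n=a_{n+1}q+a_{n-1}p$ into the $V=1$ factorization $a_{n+1}q=b_n\,p'$ and using $1-p'=q'$ collapses, for $n\ge1$, to the two-step recursion $a_{n+1}=\rho^2a_{n-1}$ with $\rho^2=pp'/(qq')$, while the boundary equation at $n=0$ gives $a_1/a_0=p'/q'$. Thus each parity class is geometric with the common ratio $\rho^2$, which is \eqref{geom}; summability of $(a_n)$ forces $\rho<1$, and $\rho>0$ because $p,q,p',q'$ are all positive. Summing the two series and using $a_1/a_0=p'/q'$ together with $p'+q'=1$ yields $\mathrm{Pr}(X\in2\mathbb N_0+1)=p'$ and $\mathrm{Pr}(X\in2\mathbb N_0)=q'$; the parallel computation for $(b_n)$ gives $\mathrm{Pr}(Y\in2\mathbb N_0)=q$, which is \eqref{solve}. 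Finally, in the sub-case $p'=p$ the boundary ratio becomes $a_1/a_0=p/q=\rho$, so $a_{n+1}/a_n\equiv\rho$ and the two parity geometrics merge into a single geometric of rate $p/q$; then $K$ is again $\mu$-reversible and \eqref{identity} follows from Proposition \ref{twobasic} as before.

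I expect the main obstacle to be the boundary term $\mathrm{Pr}(Y=0,V=-1)=a_0q$. It is this single anomalous entry, the reflection at the origin, that couples the two parity classes, fixes the ratio $a_1/a_0$, and is responsible for the dichotomy between the single-step recursion when $r>0$ and the parity-decoupled two-step recursion when $r=0$; without it the factorization would be under-determined. The remaining points, namely deducing $\rho\in(0,1)$ from summability and verifying the marginal identities after summing the geometric series, are routine once the recursions have been extracted.
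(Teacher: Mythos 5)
Your proposal is correct and is built on the same skeleton as the paper's proof: your three slices of the joint law of $(Y,V)$ are precisely the paper's event identities $\{X=0,U=-1\}=\{Y=0,V=-1\}$, $\{X=k,U=0\}=\{Y=k,V=0\}$, $\{X=k+1,U=-1\}=\{Y=k,V=1\}$, $\{X=k,U=1\}=\{Y=k+1,V=-1\}$, turned into the same factorization constraints, and the case split on $r$ is identical. The differences are in how the constraints are solved, and your route is leaner. For $r>0$ the paper multiplies the two one-step recursions at $k=0$ to get $pq=p'q'$ and combines this with the boundary identity $q=q'$; you instead sum the $V=-1$ slice to get $\mathrm{Pr}(V=-1)=p+qa_0$, which with the boundary equation pins $a_0=(q-p)/q$ and hence the geometric law --- same amount of work, different bookkeeping. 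For $r=0$ your argument is genuinely shorter: from the boundary ratio $a_1/a_0=p'/q'$ and the two-step recursion you read off the parity probabilities by summing geometric series, and likewise $b_1/b_0=p/q$ gives \eqref{solve}; this bypasses entirely the paper's auxiliary identity $\mathrm{Pr}(X\in 2\mathbb N_0+1)+q=\mathrm{Pr}(Y\in 2\mathbb N_0)+p'$, whose multi-line verification is the longest computation in the published proof. Invoking Proposition \ref{twobasic} together with the reversibility of the reflecting-walk kernel to obtain \eqref{identity} is also legitimate (the paper instead deduces it from $X\stackrel{d}{=}Y$, $U\stackrel{d}{=}V$ and the assumed independence of both pairs). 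Two small points should be made explicit in the $r=0$ case before you divide: $q'=p+qa_0\geq p>0$ and $p'=q\,(1-a_0)>0$ (the latter using $\mathrm{Pr}(X=0)<1$), and $a_0>0$ (otherwise the boundary relation $a_1q'=a_0p'$ and the two-step recursion annihilate the whole distribution); these are one-liners obtained by summing your slices. Finally, be aware that, exactly as in the paper's own proof, the inequality $p<q$ is in fact derived only when $r>0$ or when $p'=p$; in case ii) with $p'\neq p$ neither your argument nor the paper's establishes it, so your proposal matches the actual scope of the published argument on this point.
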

	\begin{proof}
		
		By the definition of $(Y,V)$ in \eqref{invo}, and the fact that this pair of random variables is an involutive function of $(X,U)$ the following identities between events hold: 
		$$\{X=0, U=-1\}=\{Y=0, V=-1\}$$ 
		and for any $k \in \mathbb N_0$
		\begin{align*}
			&\{X=k, U=0\}=\{Y=k, V=0\}, \\
			&\{X=k+1, U=-1\}=\{Y=k, V=1\},\\
			&\{X=k,U=1\}=\{Y=k+1, V=-1\}, 
			&.
		\end{align*}
		Taking the assumed independence of $X$ and $U$ and of $Y$ and $V$ into account, we have thus
		\begin{equation}\label{fourth}
			\mathrm{Pr}(X=0)\,q=\mathrm{Pr}(Y=0)\,q'
		\end{equation} and for $k \in \mathbb{N}_0$ 
		\begin{align}\label{first}
			& \mathrm{Pr}(X=k)\,\mathrm{Pr}(U=0)=\mathrm{Pr}(Y=k)\,\mathrm{Pr}(V=0),\\
			\label{second}
			&\mathrm{Pr}(X=k+1)\,q=\mathrm{Pr}(Y=k)\,p',\\
			\label{third}
			&\mathrm{Pr}(X=k)\,p=\mathrm{Pr}(Y=k+1)\,q'.
		\end{align}
		
		Summing \eqref{first} over $k$ we have $\mathrm{Pr}(U=0)=\mathrm{Pr}(V=0)=r$. That is, $p+q=p'+q'$.
		Moreover, summing \eqref{second} over all $k$ we obtain 
		\begin{equation}\label{uv1}
			p'=\mathrm{Pr}(X \geq 1)\,q=(1-\mathrm{Pr}(X=0))\,q.
		\end{equation}
		By assumption $\mathrm{Pr}(X=0)<1$ from \eqref{uv1} one gets $0<p'<q$ and thus $0<p<q'$.
		
		In view of \eqref{first}, in case $r>0$, the random variables $X$ and $Y$ have the same law. It is easily deduced from \eqref{second} and \eqref{third} that the support of their common law is the whole $\mathbb {N}_0$. Then, multiply \eqref{second} and \eqref{third} side-wise for $k=0$ and use the property $\mathrm{Pr}(X=0)\mathrm{Pr}(X=1)>0$, to conclude that
		\begin{equation*}
			p\,q=p'\,q'.
		\end{equation*}
		Since by \eqref{fourth} we have $q=q'$, it follows that $U\stackrel{d}{=}V$. Consequently, $p<q$ and from either \eqref{second} or \eqref{third} we have that $X$ and $Y$ are geometric with failure probability $\frac {p}{q}$, as stated in i).
		
		It remains to check what happens for $r=0$. The relations \eqref{second} and \eqref{third} give in this case, for any $k \in \mathbb N_0$
		\begin{equation*}\label{reqzero}
			\mathrm{Pr}(X=k+2)=\rho^2\, \mathrm{Pr}(X=k),\qquad \mathrm{Pr}(Y=k+2)=\rho^2\, \mathrm{Pr}(Y=k)
		\end{equation*}
		where 
		$$
		\rho^2= \tfrac {p\,p'}{q\,q'}\in(0, 1),
		$$
		where we took into $0<p<q'$ and $0<p'<q$. 
		Consequently, \eqref{geom} holds true. Moreover,
		summing \eqref{second} over $k\in 2 \mathbb N_0$ we obtain
		\begin{equation}\label{xoddyeven}
			\mathrm{Pr}(X \in 2\mathbb N_0+1)\,q=\mathrm{Pr}(Y \in 2\mathbb N_0)\,p',
		\end{equation}
		whereas summing \eqref{third} over $k\in 2\mathbb N_0+1$ we obtain
		\begin{equation}\label{xevenyodd}
			\mathrm{Pr}(X\in 2\mathbb N_0)\,p=\mathrm{Pr}(Y\in 2 \mathbb N_0+1)\,q'.
		\end{equation}
		Moreover,\begin{equation}\label{dai}
			\mathrm{Pr}(X \in 2\mathbb N_0+1)+q=\mathrm{Pr}(Y\in 2 \mathbb N_0)+p'.
		\end{equation}
		Indeed
		\begin{align*}
			&\mathrm{Pr}(X \in 2\mathbb N_0+1)+q\\&=\mathrm{Pr}(X \in 2\mathbb N_0+1)+\left(\mathrm{Pr}(X\in 2\mathbb N_0)+\mathrm{Pr}(X\in 2\mathbb N_0+1)\right)\,q\\
			&\stackrel{\eqref{xoddyeven}}{=}\mathrm{Pr}(X \in 2\mathbb N_0+1)+\mathrm{Pr}(X\in 2\mathbb N_0)\,q+\mathrm{Pr}(Y \in 2\mathbb N_0)\,p'\\
			&=1-\mathrm{Pr}(X\in 2\mathbb N_0)\,p+\mathrm{Pr}(Y \in 2\mathbb N_0)\,p'\\
			&\stackrel{\eqref{xevenyodd}}{=}1-\mathrm{Pr}(Y\in 2 \mathbb N_0+1)\,q'+\mathrm{Pr}(Y \in 2\mathbb N_0)\,p'\\
			&=1-\mathrm{Pr}(Y\in 2 \mathbb N_0+1)+\left(\mathrm{Pr}(Y\in 2 \mathbb N_0+1)+\mathrm{Pr}(Y \in 2\mathbb N_0)\right)\,p'\\
			&=\mathrm{Pr}(Y\in 2 \mathbb N_0)+p'.
		\end{align*}
		
		Since $0<p'<q$, the identities \eqref {dai} and \eqref{xoddyeven} yield
		$$
		\mathrm{Pr}(Y \in2\mathbb N_0)=q,\quad\mbox{and}\quad  \mathrm{Pr}(X\in 2\mathbb N_0+1)=p',
		$$
		as stated.
		
		In case $p'=p$ (i.e. $q=q'$) we see that 
		$$
		(1-\rho^2)\,p'=(1-\rho)(1+\tfrac{p}{q})p=(1-\rho)\rho
		$$
		and 
		$$(1-\rho^2)\,q'=(1-\rho)(1+\tfrac{p}{q})q=1-\rho.
		$$
		Consequently, for all $k\in \mathbb N_0$ we have
		$$
		\mathrm{Pr}(X=2k)=\mathrm{Pr}(X=2k|X\in 2\mathbb N_0)\,q'=(1-\rho^2)\rho^{2k}q=(1-\rho)\rho^{2k}
		$$
		and
		$$
		\mathrm{Pr}(X=2k+1)=\mathrm{Pr}(X=2k+1|X\in 2\mathbb N_0+1)\,p'=(1-\rho^2)\rho^{2k}p=(1-\rho)\rho^{2k+1},
		$$
		i.e. $X$ is geometric with the failure rate $\rho$.
	\end{proof}
	
	\section{The Skorokhod and the Rosenblatt representation}
	
	There is a universal way to  generate  a transition kernel $K(x,dy)$, defined on an open interval $\mathcal X=(a,b)\subset\mathbb {R}$ (possibly infinite at one or both sides), by a Borel measurable function $f$, defined on $(a,b) \times (0,1)$ with values in $(a,b)$ and with $U$ uniform on $\mathcal U=(0,1)$: the Skorohod transformation (see e.g. Kifer \cite{Kif86}).
	
	Let us suppose that the distribution function $F_x(y)=K(x,(-\infty,y])$ associated to $K(x,\cdot)$ is such that $F_x(a)=0$, $F_x(b-)=1$, and $F_x$ is continuously increasing on $(a,b)$. The Skorokhod transformation represents the kernel $K(x, \cdot)$ with the family of random functions
	$$
	f(x,U)=F^{-1}_x(U), \quad x \in (a,b),
	$$ 
	where $U \mbox{ is uniform on }  (0,1)$.
	
	Next, for any choice of $x, y \in (a,b)$, the equation $F^{-1}_x(u)=y$  has the unique solution  $u=\sigma(x,y)=F_x(y)$: hence Proposition \ref{basic} can be applied, with $A_f=U_f=(a,b)$: the choice
	$$
	g_f(x,u)=\sigma (f(x,u),x)=F_{F^{-1}_x(u)}(x)
	$$
	is necessary and sufficient to construct an involution $(f,g_f)$ on $(a,b) \times (0,1)$. 
	
	Assume that the function $(a,b) ^2\ni (x,y)  \mapsto F_y(x) \in (0,1)$ is Borel measurable. Then, since for any $t \in (a,b)$
	$$
	\{(x,u): F_x^{-1}(u) \leq t \}= \{(x,u): u \leq F_x(t)\},
	$$
	and $F_x(t)$ is Borel measurable in $x \in (a,b)$ for any fixed $t \in (a,b)$, then $f$ is Borel measurable. Moreover, as a composition of the Borel measurable function $(x,y) \mapsto F_y(x)$ with the measurable transformation from $(a,b) \times (0,1)$ into $(a,b)^2$
	$$
	(x,u) \mapsto (x, F_x^{-1}(u)),
	$$
	also $g_f(x,u)$ turns out to be Borel measurable.
	
	Without the reversibility assumption the product probability $\mu \otimes \lambda$, $\lambda$ being the Lebesgue measure in $(0,1)$, has no reason to be preserved by $(f,g_f)$. But in case $\{K(x,\cdot), x \in (a,b)\}$ is $\mu$-reversible, Proposition  \ref{twobasic} implies that if $X$ and $U$ are independent and $\mu$ and $\lambda$-distributed, respectively, then $Y:=F^{-1}_X(U)$ is $\mu$-distributed (this is nothing but the Skorokhod result) and moreover it is independent of $V:=g_f(X,U)=F_Y(X)$, that has the distribution $\lambda$.
	
	Finally notice that, by the reversibility assumption, for any $x, y \in (a,b)$,
	$$
	G_y(x):=\mathrm{Pr}(X \leq x | Y=y)=\mathrm{Pr}(Y \leq x | X=y)=F_y(x)
	$$
	which implies
	$$
	g_f (x,u)=G_{F^{-1}_x(u)}(x).
	$$ 
	Thus the previous conclusion can be rephrased by saying that the random variables
	$$
	Y=F^{-1}_X(U), V=G_Y(X)
	$$
	are independent with distributions $\mu$ and $\lambda$, respectively. This is a particular instance of the so-called Rosenblatt transformation $R(x,y)$ (introduced in \cite{Ros52}), which for any pair of random variables $(X,Y)$ is defined by $R(x,y)=\mathrm{Pr}(Y\le y|X=x)$, and under suitable regularity conditions on their joint distribution, has the property that $X$ and $R(X,Y)\sim \lambda$ are independent random variables.
	The Rosenblatt transformation actually is defined in a more general multivariate setting and without any reversibility assumption.
	
	\begin{example} Consider the Gaussian kernel on $\mathbb {R}$, given by
		$$
		K(x,dy)=\mathrm N(\beta x, \sigma^2) (dy),
		$$
		where $|\beta|<1$ and $\sigma >0$. The corresponding distribution function is $F_x(y)=\Phi\left(\tfrac{y-\beta x}{\sigma}\right)$, where $\Phi$ is the distribution function of the $\mathrm N(0,1)$ law. It is immediately verified that $\mu = \mathrm N\left(0, \tfrac{\sigma^2}{1-\beta^2}\right)$ is a stationary distribution and that $K$ is actually $\mu$-reversible. The kernel can be represented as in \eqref{ker} by the function
		$$
		f(x,U)=F_x^{-1}(U)=\beta x + \sigma \Phi^{-1} (U),
		$$
		where $U$ is a random variable with the distribution $\mathrm U(0,1)$, uniform in $(0,1)$. That is $Y=f(X,U)=\beta X+\sigma\Phi^{-1}(U)$, where $X\sim \mathrm N\left(0, \tfrac{\sigma^2}{1-\beta^2}\right)$ and $U\sim \mathrm U(0,1)$ are independent. By reversibility, when $X \sim \mathrm N\left(0, \tfrac{\sigma^2}{1-\beta^2}\right)$ and the conditional law of $Y$ given $X=x$ is $\mathrm N(\beta x, \sigma^2)$, the conditional law of $X$ given $Y=x$ is the same, thus it has distribution function $\Phi(\tfrac {\cdot -\beta x}{\sigma})$. As a consequence for $(X,U)\sim \mathrm N\left(0, \tfrac{\sigma^2}{1-\beta^2}\right)\otimes\mathrm U(0,1)$ we have
		\begin{align*}
			V=&g_f(x,u)=F_{F_X^{-1}(U)}(X)=\Phi\left(\tfrac{X-\beta F_X^{-1}(U)}{\sigma}\right)\\=& \;\Phi\left(\tfrac{X-\beta (\beta X+\sigma\Phi^{-1}(U)}{\sigma}\right) 
			=\Phi\left(\tfrac{X(1-\beta^2)}{\sigma} -\beta\Phi^{-1}(U)\right).
		\end{align*}
		Note that $\tfrac{X(1-\beta^2)}{\sigma} -\beta\Phi^{-1}(U)\sim\mathrm N(0,1)$. Consequently, $V\sim \mathrm U(0,1)$. Moreover, $Y$ and $V$ are independent since $Y=\beta X+\sigma\Phi^{-1}(U)$ and $\Phi^{-1}(V)=\tfrac{X(1-\beta^2)}{\sigma} -\beta\Phi^{-1}(U)$ are independent. For this, it suffices to note that $$\mathrm{Cov}\left(\beta X+\sigma\Phi^{-1}(U),\;\tfrac{1-\beta^2}{\sigma}\,X -\beta\Phi^{-1}(U)\right)=\tfrac{\beta(1-\beta^2)}{\sigma}\,\mathrm{Var}(X)-\beta\sigma\,\mathrm{Var}(\Phi^{-1}(U))=0.$$  
		Consequently,  the involution $$\mathbf H(x,u)=(\beta x+\sigma\Phi^{-1}(u),\,\Phi\left(\tfrac{1-\beta^2}{\sigma}\,x-\beta\Phi^{-1}(u)\right)),$$ defined on $\mathbb R\times (0,1)$, preserves the product law $\mathrm{N}\left(0,\tfrac{\sigma^2}{1-\beta^2}\right)\otimes \mathrm U(0,1)$.
	\end{example}
	
	\vspace{3mm}\small
	{\bf Acknowledgement:} The research of JW was partially supported by the National Science Center, Poland,  project no. 2023/51/B/ST1/01535. 
	
	\bibliographystyle{amsplain}
	
	\bibliography{RMK_Biblio}
	
\end{document}